\newtheorem{Theorem}{\quad Theorem}[section]
\begin{document}
\centerline{\Large{\textbf{A Queueing Model for Sleep as a Vacation}}}
\begin{center}
\quad

\textbf{Nian LIU}

\quad
School of Mathematics and Statistics

Central South University

Changsha 410083, Hunan, China

\quad

\textbf{Myron HLYNKA}

\quad

Department of Mathematics and Statistics

University of Windsor

Windsor, Ontario, Canada N9B 3P4

\quad
\end{center}

\begin{abstract}
Vacation queueing systems are widely used as an extension of the classical queueing theory. We consider both working vacations and regular vacations in this paper, and compare systems with  vacations to the regular $M/M/1$ system via mean service rates and expected numbers of customers, using matrix-analytic methods. 
\end{abstract}

\textbf{AMS Subject Classification:} 60K25, 90B22, 60G20

\textbf{Keywords:} \quad vacation queues, working vacation, matrix-analytic methods, quasi birth and death processes

\quad

\section{Introduction}

In an article in the journal Science in 2013, Xie et al. (\cite{Xie}) stated ``the restorative function of sleep may be a consequence of the enhanced removal of potentially neurotoxic waste products that accumulate in the awake central nervous system'' indicating the value of sleep in changing the parameters of the brain's functioning. We can choose to consider the brain as a server in a queueing system which decreases its service rate over time but recovers after it has a rest (vacation). 

Vacation queueing systems have been studied by many authors with different models (\cite{Dos}, \cite{TianZhang}, \cite{GuoHas}, \cite{IsiO}, \cite{ZhHou}). Working vacations, introduced by Servi and Finn(2002)\cite{SerFin}, refer to a time  period, during which the service slows but does not stop. Servers would gradually get exhausted during continuous work, but the service rate could increase after a vacation of the server. We include two types of systems in this paper. The first kind of system is the regular $M/M/1$ system, in which the server works without vacations, and the service rate is a constant with a relatively low value  \cite{GroHar}. The other kind of system also has exponential interarrival and service times. However, the service rate changes after each state transition. When the service rate decreases to a certain value, the server stops working and has a vacation, after which the service rate would return to the highest level.

To compare the performances of different queueing systems, two commonly used measures  are the expected waiting time of a customer, $E(W)$), and the expected number of customers $E(L)$ in the system. These are related via Little's formula  \cite{GroHar}. In this paper, we only use $E(L)$ to measure performance of different queueing systems. Values of $E(L)$ are obtained using matrix-analytic methods (\cite{LatG}, \cite{HeQ}).

We show that the system with vacations performs better than the regular $M/M/1$ system under certain conditions.

\section{Quasi Birth and Death Processes with 4 Phases}
In this section, we compare a queueing system with working vacations with a regular $M/M/1$ system having a constant service rate.

Consider the decrease of service rate over time as working vacations (\cite{SerFin}), during which the server works with lower efficiency. The number of customers in the system and states of the server form a continuous time Markov process $\{(X(t),Y(t)),t\ge 0\}$, where $X$ is the level variable (number of customers) and $Y$ is the phase variable (server efficiency with low values indicating a high efficiency). Each state $(n,i)$ with $X(t)=n>0$ and $Y(t)=i<4$ moves to $(n-1,i+1)$ with rate $\mu_i$, or to $(n+1,i+1)$ with rate $\lambda$. For all $n\in \mathbb{N}$, state $(n,4)$ will always go to $(n+2,1)$ with rate $\lambda/2$. State $(0,i)$ will always go to  $(1,i+1)$ with rate $\lambda$, $i=1,2,3$. Set $\mu_1=\mu$, $\mu_2=a\mu$ and $\mu_3=b\mu$ ($0<b<a<1$). The process can be shown by the following network.
 The system  takes a working vacations when $Y=2$ or $3$, having a regular vacation with interval $\sim Exp(\lambda/2)$ when $Y=4$.

\[
\xymatrix{
(0,1)\ar[rd]^(0.3)\lambda & \ar[ld]_(0.3) {\mu}(1,1)\ar[rd]^(0.3)\lambda & \ar[ld]_(0.3) {\mu}(2,1)\ar[rd]^(0.3)\lambda & \ar[ld]_(0.3){\mu}(3,1) &  \dots \\
(0,2)\ar[rd]^(0.3)\lambda & \ar[ld]_(0.3){a\mu}(1,2)\ar[rd]^(0.3)\lambda & \ar[ld]_(0.3){a\mu}(2,2)\ar[rd]^(0.3)\lambda & \ar[ld]_(0.3){a\mu}(3,2) &  \dots \\
(0,3)\ar[rd]^(0.3)\lambda & \ar[ld]_(0.3){b\mu}(1,3)\ar[rd]^(0.3)\lambda & \ar[ld]_(0.3){b\mu}(2,3)\ar[rd]^(0.3)\lambda & \ar[ld]_(0.3){b\mu}(3,3) &  \dots \\
(0,4) \ar"1,3"|{\frac{\lambda}{2}} & (1,4) \ar"1,4"|{\frac{\lambda}{2}} & (2,4)\ar"1,5"|{\frac{\lambda}{2}} & (3,4) &  \dots
}
\]

The motivation for the model is that each arrival or service completion takes time and reduces the server's efficiency so Y (phase) increases at each step ($i=1,2,3$). For $i=4$, the server is exhausted and even though there may be customers to be served, the server takes a vacation long enough for 2 more customers to arrive, and then begins service with renewed vigor and first level of efficiency. Setting up the model in this way keeps the transition between states exponential at all times. The interarrival rate for customers is $\lambda$ so the expected time between until the next customer is $1/\lambda$. The expected time for two customers to arrive is $2/\lambda$ so we take the arrival rate to be $\lambda/2$ to move from state $(n,4)$ to state $(n+2,1)$ (vacation time). Another approach could have used the sum of two exponentials (each with rate $\lambda$) but we can keep our model simpler  by using rate $\lambda/2$ to have 2 customers arrive. The two approaches are not identical, though the mean times are the same, but we keep our state space more tractable using our approach. 

\begin{Theorem}
The system is stable if $\lambda<   \dfrac{\frac{\mu}{\lambda+\mu}+\frac{a\mu}{\lambda+a\mu}+\frac{b\mu}{\lambda+b\mu}+0\cdot\frac{2}{\lambda}}
{\frac{1}{\lambda+\mu}+\frac{1}{\lambda+a\mu}+\frac{1}{\lambda+b\mu}+\frac{2}{\lambda}}$
\end{Theorem}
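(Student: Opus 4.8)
The plan is to treat $\{(X(t),Y(t)):t\ge 0\}$ as a level-independent Markov chain on the levels $n\ge 1$ (the boundary level $0$ consists only of the three states $(0,1),(0,2),(0,3)$ and, being finite, does not affect the recurrence classification) and to apply the mean-drift positive-recurrence criterion of matrix-analytic theory. First I would isolate the phase process: for every level $n\ge 1$ the phase performs the single cycle $1\to2\to3\to4\to1$ with total rates $\lambda+\mu$, $\lambda+a\mu$, $\lambda+b\mu$, $\lambda/2$ respectively, so the phase generator is
\[
A=\begin{pmatrix}
-(\lambda+\mu) & \lambda+\mu & 0 & 0\\
0 & -(\lambda+a\mu) & \lambda+a\mu & 0\\
0 & 0 & -(\lambda+b\mu) & \lambda+b\mu\\
\lambda/2 & 0 & 0 & -\lambda/2
\end{pmatrix}.
\]
The only departure from the standard QBD form is that from phase $4$ the level jumps by $+2$ rather than $+1$, so the process is a Markov chain of $M/G/1$ (batch-arrival) type; I would invoke the corresponding positive-recurrence criterion, in which the \emph{mean} upward jump size enters the drift. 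Equivalently one may merge levels in consecutive pairs (or append a dummy phase) to obtain a genuine QBD and then apply Neuts' condition $\pi A_0\mathbf 1<\pi A_2\mathbf 1$ verbatim; I will note both routes.

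Next I would solve $\pi A=0$, $\pi\mathbf 1=1$. Since the phase chain is one cycle, the balance equations collapse to $\pi_1(\lambda+\mu)=\pi_2(\lambda+a\mu)=\pi_3(\lambda+b\mu)=\pi_4(\lambda/2)$, whence $\pi_i$ is proportional to the mean sojourn time in phase $i$:
\[
\pi_1:\pi_2:\pi_3:\pi_4=\frac{1}{\lambda+\mu}:\frac{1}{\lambda+a\mu}:\frac{1}{\lambda+b\mu}:\frac{2}{\lambda},
\]
and normalising divides by $S:=\dfrac{1}{\lambda+\mu}+\dfrac{1}{\lambda+a\mu}+\dfrac{1}{\lambda+b\mu}+\dfrac{2}{\lambda}$.

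Finally I would compute the two drift rates under $\pi$. The expected rate of level increase is $\lambda$ from each of phases $1,2,3$ (an arrival, jump $+1$) and $(\lambda/2)\cdot 2=\lambda$ from phase $4$ (jump $+2$), so the mean upward drift is $\lambda(\pi_1+\pi_2+\pi_3+\pi_4)=\lambda$. The expected rate of level decrease is $\mu,a\mu,b\mu$ in phases $1,2,3$ and $0$ in phase $4$, so the mean downward drift is $\mu\pi_1+a\mu\pi_2+b\mu\pi_3+0\cdot\pi_4$. Positive recurrence holds precisely when the downward drift exceeds the upward one; substituting the $\pi_i$ and multiplying through by $S$ turns $\mu\pi_1+a\mu\pi_2+b\mu\pi_3>\lambda$ into exactly the stated inequality, the term $0\cdot\frac{2}{\lambda}$ being the (vanishing) phase-$4$ contribution written to match the pattern.

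The linear algebra here is routine; the one point that genuinely needs care is the non-standard $+2$ level jump from phase $4$ — one must ensure the jump size is carried into the upward-drift term (so phase $4$ contributes $\lambda$, not $\lambda/2$) and justify that the $M/G/1$-type drift criterion, rather than the literal QBD one, is the version being applied. Verifying irreducibility of the chain and confirming that the finite boundary level cannot alter the classification are the only other items to dispatch.
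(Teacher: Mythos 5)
Your proposal is correct and follows essentially the same route as the paper: the stationary phase proportions proportional to the mean sojourn times $\frac{1}{\lambda+\mu},\frac{1}{\lambda+a\mu},\frac{1}{\lambda+b\mu},\frac{2}{\lambda}$, the resulting weighted-average service rate, and the comparison with the arrival drift $\lambda$ are exactly the paper's argument. You merely make explicit what the paper leaves heuristic (the $+2$ jump from phase $4$ entering the upward drift as $2\cdot\lambda/2=\lambda$, and the appeal to an M/G/1-type or merged-level QBD drift criterion), which is a welcome tightening rather than a different approach.
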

\begin{proof}
It is sufficient to consider the situation when the level is large as that determines the stability condition. 
For states with phase variable $Y=i$ ($i=1,2,3,4$), and level $X$ large, let $v_i$ be the state transition rate, and let $w_i$ be proportion of sojourn time in those states.
\begin{equation}
w_i=\frac{\frac{1}{v_i}}{\sum_{i=1}^4\frac{1}{v_i}}
\end{equation}
where $v_1=\lambda+\mu$, $v_2=\lambda+a\mu$, $v_3=\lambda+b\mu$, $v_4=\lambda/2$.

The average service rate of the system (for large level $X$) should be calculated as a weighted average.

\begin{align}
\bar \mu &= \sum_{i=1}^4w_i\mu_i \notag\\
&= \frac{\frac{\mu}{\lambda+\mu}+\frac{a\mu}{\lambda+a\mu}+\frac{b\mu}{\lambda+b\mu}+0\cdot\frac{2}{\lambda}}
{\frac{1}{\lambda+\mu}+\frac{1}{\lambda+a\mu}+\frac{1}{\lambda+b\mu}+\frac{2}{\lambda}} 
\end{align}
The system is stable if $\lambda<\bar{\mu}$. The result follows. \end{proof}

We note in the previous proof that $\bar{\mu}$ is a function of $\lambda$. To emphasize this, we define
\begin{equation*}
g(\lambda) \stackrel{\bigtriangleup}{=}  \frac{\frac{\mu}{\lambda+\mu}+\frac{a\mu}{\lambda+a\mu}+\frac{b\mu}{\lambda+b\mu}+0\cdot\frac{2}{\lambda}}
{\frac{1}{\lambda+\mu}+\frac{1}{\lambda+a\mu}+\frac{1}{\lambda+b\mu}+\frac{2}{\lambda}}.
\end{equation*}

Unfortunately, for our 4 phase model, it turns out that regardless of $\lambda$, $\mu$, $a$, $b$, the expected number of customers will be shorter under a regular $M/M/1$ model with service rate $b\mu$ than under our model that allows for a vacation, at the cost of two customers arriving. We prove this as follows.   

\begin{Theorem}
 For the 4 phase model which is stable (i.e. $g(\lambda)>\lambda$), $g(\lambda)$ is always smaller than $b\mu$.
\end{Theorem}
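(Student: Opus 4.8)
The plan is to turn the claimed inequality $g(\lambda)<b\mu$ into a transparent algebraic condition and then feed in only as much of the stability hypothesis as is needed.

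First I would clear denominators. Write $g(\lambda)=A/B$, where $A$ and $B$ are the numerator and the strictly positive denominator in the definition of $g$. Then $g(\lambda)<b\mu$ is equivalent to $A<b\mu B$, and since the term $\frac{b\mu}{\lambda+b\mu}$ occurs on both sides it cancels; dividing what remains by $\mu>0$ gives the equivalent form
\[
\frac{1}{\lambda+\mu}+\frac{a}{\lambda+a\mu}\;<\;b\left(\frac{1}{\lambda+\mu}+\frac{1}{\lambda+a\mu}+\frac{2}{\lambda}\right).
\]
Equivalently, $b$ must exceed the ratio $R:=\left(\frac{1}{\lambda+\mu}+\frac{a}{\lambda+a\mu}\right)\big/\left(\frac{1}{\lambda+\mu}+\frac{1}{\lambda+a\mu}+\frac{2}{\lambda}\right)$, so the theorem reduces to showing that stability forces $b>R$.

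Next I would read off a usable consequence of stability. Rewriting $\frac{\mu}{\lambda+\mu}=1-\frac{\lambda}{\lambda+\mu}$ and similarly for the $a\mu$ and $b\mu$ terms, the stability condition $\lambda<g(\lambda)=A/B$ becomes $\lambda B<A$, that is $S+2<3-S$ with $S:=\frac{\lambda}{\lambda+\mu}+\frac{\lambda}{\lambda+a\mu}+\frac{\lambda}{\lambda+b\mu}$; hence stability is equivalent to
\[
\frac{\lambda}{\lambda+\mu}+\frac{\lambda}{\lambda+a\mu}+\frac{\lambda}{\lambda+b\mu}\;<\;\frac12 .
\]
Since the three summands are positive, each is $<\tfrac12$; in particular $\frac{\lambda}{\lambda+b\mu}<\tfrac12$, which rearranges to $\lambda<b\mu$, hence $b>\lambda/\mu$.

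It therefore suffices to prove $\lambda/\mu\ge R$, for then $b>\lambda/\mu\ge R$ and we are done. Clearing the positive denominator of $R$, this inequality is exactly
\[
\frac{\lambda}{\mu(\lambda+\mu)}+\frac{\lambda}{\mu(\lambda+a\mu)}+\frac{2}{\mu}\;\ge\;\frac{1}{\lambda+\mu}+\frac{a}{\lambda+a\mu},
\]
and here the right-hand side is strictly below $\tfrac{2}{\mu}$, because $\frac{1}{\lambda+\mu}<\frac{1}{\mu}$ and $\frac{a}{\lambda+a\mu}<\frac{a}{a\mu}=\frac{1}{\mu}$ (both using $\lambda>0$), while the left-hand side is plainly at least $\tfrac{2}{\mu}$. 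I expect the only steps needing care to be the first two: noticing that the $\frac{b\mu}{\lambda+b\mu}$ terms cancel, so that $a$ and $b$ enter in a tractable way, and observing that stability already forces $\lambda<b\mu$. After that the final estimate is routine; in fact it uses only $\lambda<b\mu$ together with $0<b<a<1$, not the full strength of the stability condition.
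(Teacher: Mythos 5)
Your proof is correct, but it takes a genuinely different and noticeably more elementary route than the paper's. The paper combines $g(\lambda)-b\mu$ into a single fraction, obtaining $g(\lambda)-b\mu=-\mu f(\lambda)/D$ with $D>0$ and $f$ a cubic in $\lambda$, and then argues that $f(\lambda)>0$ on the stable region by explicit root analysis split into three cases according to the sign of $4b-a-1$ (including a square-root discriminant expression and, in the case $4b-a-1<0$, an appeal to instability beyond the largest root). You instead cancel the common term $\frac{b\mu}{\lambda+b\mu}$, reduce the claim to $b>R$ with $R$ depending only on $\lambda,\mu,a$, observe that the stability hypothesis $g(\lambda)>\lambda$ is equivalent to $\frac{\lambda}{\lambda+\mu}+\frac{\lambda}{\lambda+a\mu}+\frac{\lambda}{\lambda+b\mu}<\frac12$ and hence forces $\lambda<b\mu$, and finally verify $\lambda/\mu> R$ unconditionally by bounding both sides of the cleared inequality against $2/\mu$. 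All of these steps check out: the cancellation is legitimate because $b\mu B$ contains exactly the term $\frac{b\mu}{\lambda+b\mu}$, and the rewriting $\frac{\mu_i}{\lambda+\mu_i}=1-\frac{\lambda}{\lambda+\mu_i}$ does turn $\lambda B<A$ into $S+2<3-S$. What your argument buys is brevity and transparency: no case analysis, no explicit cubic roots, and it isolates precisely which consequence of stability is actually needed, namely $\lambda<b\mu$ (the arrival rate must lie below even the slowest working rate). What the paper's computation buys in exchange is an explicit description of where $g(\lambda)-b\mu$ changes sign as a function of $\lambda$, i.e., how the inequality behaves outside the stable region, which your streamlined argument does not provide.
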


\begin{proof}
First note that in our 4 phase model, states  (0,1), (0,2) and (1,1) are not recurrent. Further, the average service rate that appears for large level $X$ is an upper bound on the rate for small levels (like 1). So we will work with the service rate for large levels. Now 
\begin{equation*}
g(\lambda)-b\mu=-\mu\cdot\frac{\lambda^3(4b-a-1)+2\lambda^2\mu(ab+b-a+2b^2)+3\lambda\mu^2b^2(a+1)+2ab^2\mu^3}{3\lambda\mu^2(a+b+ab)+4\lambda^2\mu(a+b+1)+5\lambda^3+2ab\mu^3}
\end{equation*}
The denominator is always positive so we define 
\begin{equation*}
f(\lambda) \stackrel{\bigtriangleup}{=}\lambda^3(4b-a-1)+2\lambda^2\mu(ab+b-a+2b^2)+3\lambda\mu^2b^2(a+1)+2ab^2\mu^3,
\end{equation*}
Note that $g(\lambda)-b\mu<0 \Leftrightarrow f(\lambda)>0$.\\
We will view the situation graphically by considering $f(\lambda)$ which is usually a cubic in $\lambda$. \\
Case 1: $4b-a-1=0$. Then $f(\lambda)$ becomes a quadratic. Also $a=4b-1$. 
The coefficient of $\lambda^2$ in $f(\lambda)$ is $2\mu(ab+b-a+2b^2)=2\mu(a(b+1)+b+2b^2)$, which is $>0$, os the quadratic is convex. The two real roots of $f(\lambda)=0$ are $-b\mu$ and $-\frac{b\mu(4b-1)}{6b^2-4b+1}$, which are both negative. So the value of $f(\lambda)$ is positive for value of $\lambda$ which is greater than the largest root so $f(\lambda)>0$ for $\lambda>0$ , as desired. \\
Case 2: 
When $4b-a-1 < 0$, $f(\lambda)$ is a cubic with a negative coefficient for the $\lambda^3$ term.  
Let $A=\sqrt{9a^2b^2-4a^2b-14ab^2+4a^2-4ab+9b^2}$. The 3 roots of $f(\lambda)=0$ are $-b\mu$,$-\frac{\mu(3ab-2a+3b+A)}{2(4b-a-1)}$ and $-\frac{\mu(3ab-2a+3b-A)}{2(4b-a-1)}$. Two of the three roots of $f(\lambda)$ are negative with the largest root $-\frac{\mu(3ab-2a+3b+A)}{2(4b-a-1)}$.
So the cubic $f(\lambda)$ will be positive between the second largest root and the largest root, after which it becomes negative. But for $\lambda$ greater than the largest root, we have $g(\lambda)>\lambda$ so we are outside the stable region of the system. So our result is still true.\\
Case 3:
 When $4b-a-1 > 0$, $f(\lambda)$ is a cubic with a positive coefficient for the $\lambda^3$ term.  Again, we get 3 roots of $f(\lambda)=0$. The largest of the three roots is $-\frac{\mu(3ab-2a+3b-A)}{2(4b-a-1)}$.  However, the largest root would be a negative number under the following analysis.

\begin{align}
4b-a-1>0 &\Rightarrow b<a<4b-1 \notag \\
&\Rightarrow b<4b-1 \notag \\
&\Rightarrow b\in(\frac{1}{3},1) \notag \\
0<a<1 &\Rightarrow \frac{a}{a+1}\in(0,\frac{1}{2}) \notag \\
&\Rightarrow \frac{2a}{3a+3}\in(0,\frac{1}{3}) \notag \\
&\Rightarrow b>\frac{a}{a+1} \notag \\
&\Rightarrow 3ab-2a+3b>0 \notag
\end{align}

Thus, there would be
\begin{align}
&3ab-2a+3b-\sqrt{9a^2b^2-4a^2b-14ab^2+4a^2-4ab+9b^2}<0 \notag \\
&\Leftrightarrow (3ab-2a+3b)^2<9a^2b^2-4a^2b-14ab^2+4a^2-4ab+9b^2 \notag \\
&\Leftrightarrow ab(4b-a-1)<0 \notag
\end{align}

Since $f(\lambda)$ is a cubic with a positive coefficient for $\lambda^3$, then $f(\lambda)$ must be positive for all $\lambda$ larger than the largest root of $f(\lambda)=0$ so $f(\lambda)>0$ for all $\lambda>0$.

The result follows. \end{proof}

Hence, when the service rate of a regular $M/M/1$ system equals the lowest service rate in the 4 phase system, the 4 phase system will always have a lower overall average service rate than the $M/M/1$ system. This means that the M/M/1 system will have a lower expected number of customers than the 4 phase system and there is no advantage in using the 4 phase system. As a result, we move to consider a 5 phase system. 

\section{Quasi-Birth-and-Death Process with 5 States of Service}
Add one more phase standing for $c\mu$ ($0<c<b<a<1$) as service rate to the former system, and change the constant service rate in $M/M/1$ to $c\mu$. The proportion of time when the server stops working in the new system would decrease. With fixed $a$, $b$, $c$ and $\mu$, there would be a range of $\lambda$ such that the average service rate in the new system is higher than that in $M/M/1$, and the expected number of customers would be reduced when servers take some time to rest. The statement could be proved more succinctly by numerical methods rather than analytical ones.

\subsection{Matrix-Analytic Methods for Calculating the Expected Number of Customers}
For the 5-phase system, let the states be\\
$(0,1),(0,2),(0,3),(0,4),(0,5), (1,1),(1,2),(1,3),(1,4),(1,5), (2,1),(2,2),\dots$. 
The Q-matrix (infinitesimal matrix) of the system with 5 states of service is 
\[
Q1=
\begin{pmatrix}
A_{00} & A_{01} & A_{02} &        &        &        &  \\
A_{10} & A_{11} & A_{01} & A_{02} &        &        &   \\
       & A_{10} & A_{11} & A_{01} & A_{02} &        &   \\
       &        & A_{10} & A_{11} & A_{01} & A_{02} &   \\
       &        &        & \ddots & \ddots & \ddots & \ddots
\end{pmatrix}
\]
where \begin{align*}
A_{00} =\begin{pmatrix}
-\lambda &   &   &   &   \\
   & -\lambda &   &   &   \\
   &   & -\lambda &   &   \\
   &   &   & -\lambda &   \\
   &   &   &   & -\lambda/2
\end{pmatrix}_{5\times 5}
,
A_{01} =\begin{pmatrix}
  0& \lambda &   &   &   \\
   &   0& \lambda &   &   \\
   &   &  0 & \lambda &   \\
   &   &   &  0 & \lambda\\
   &   &   &   &0
\end{pmatrix}_{5\times 5}
\\
A_{02} =\begin{pmatrix}
 0 & 0 & \dots & 0 \\
 \vdots & \vdots & \ddots & \vdots \\
 0 & 0 & \dots & 0 \\
 \lambda/2 & 0 &\dots & 0
\end{pmatrix}_{5\times 5}
,
A_{10} =\begin{pmatrix}
  0 & \mu &   &   &   \\
   &  0 & a\mu &   &   \\
   &   &  0 & b\mu &   \\
   &   &   &  0 & c\mu \\
   &   &   &   &0
\end{pmatrix}_{5\times 5}
\\
A_{11}=\begin{pmatrix}
-(\lambda+\mu) &   &   &   &   \\
   & -(\lambda+a\mu) &   &   &   \\
   &   & -(\lambda+b\mu) &   &   \\
   &   &   & -(\lambda+c\mu) &   \\
   &   &   &   & -\lambda/2
\end{pmatrix}_{5\times 5}
\end{align*}

Since states $(0,1)$, $(0,2)$ and $(1,1)$ are not positive recurrent, we delete the corresponding rows and columns from Q1. 
Let 
\[ A_0= \begin{pmatrix} A_{02} & 0 \\A_{01} & A_{02} \end{pmatrix}, A_1= \begin{pmatrix} A_{11} & A_{01} \\A_{10} & A_{11} \end{pmatrix}, A_2= \begin{pmatrix} 0 & A_{10} \\ 0 & 0 \end{pmatrix}\]
After that, the Q matrix could be written as
\begin{equation}
Q= \begin{pmatrix}
B_{11}   & B_{12} &  & &  &  \\
B_{21} & A_1 & A_0 &  &  &  \\
    & A_2 & A_1 & A_0 &   &  \\
 &  & A_2 & A_1 & A_0 &   \\
 & & &\ddots&\ddots&\ddots
\end{pmatrix}
\label{Qmatrix}
\end{equation}
where
\[
B_{11}=\begin{pmatrix}
-\lambda &0  &0  &0  &0  &-\lambda  &0\\
  0 &-\lambda &0  &0  &0  &0  &-\lambda \\
  0 &0  & -\lambda/2 &0  &0  &0  &0  \\
a\mu &0  &0  &-(\lambda+a\mu) &0  &0  &0  \\
0  &b\mu &0  &0  &-(\lambda+b\mu) &0  &0  \\
 0 &0  &c\mu &0  &0  &-(\lambda+c\mu) &0  \\
0  &0  &0  &0 &0  &0  &-\lambda/2 \\
\end{pmatrix}_{7 \times 7}
\]\[
B_{12}=\begin{pmatrix}
 0 & & & & &  & & &\\
 0 &0 & & & & & & &\\
 \lambda/2 &0 &0 & & & & & &\\
 0&0 &\lambda &0 & & & &\\
 0&0 &0 &\lambda &0 & & &\\
 0&0 &0 &0 &\lambda &0 & &\\
 0&0 &0 &0 &0 &-\lambda/2 &0  &
\end{pmatrix}_{7\times 10}
,\quad
B_{21}=\begin{pmatrix}
& &\mu &0 &0 &0 \\
& & &a\mu &0 &0 \\
& & & &b\mu & 0\\
& & & & &c\mu \\
& & & & & \\
& & & & &
\end{pmatrix}_{10 \times 7}
\]

Note that $Q$ has the form of a quasi birth and death process while $Q1$ did not. \\
Let $\vec{\pi}_0 = (\pi_{(0,3)}, \pi_{(0,4)}, \pi_{(0,5)}, \pi_{(1,2)}, \pi_{(1,3)}, \pi_{(1,4)}, \pi_{(1,5)})$.

For $j\ge1$, let $\vec{\pi}_j = (\pi_{(j+1,1)}, \dots, \pi_{(j+1,5)}, \pi_{(j+2,1)}, \dots, \pi_{(j+2,5)})$. Let $\vec{\pi}=(\vec{\pi}_0, \vec{\pi}_1,\dots)$.
From $\vec{\pi} Q = \vec{0}$, we have:
\begin{align}
\vec{\pi}_0B_{11} +\vec{\pi_1}B_{21} = 0 \label{pi1} \\
\vec{\pi}_0B_{12} +\vec{\pi}_1(A_1+RA_2) = 0 \label{pi2}
\end{align}
Also, 
\begin{align}
\vec{\pi}_j = \vec{\pi}_1R^{j-1},\quad \forall j \ge 1\label{piR} \\
R^2A_2 + RA_1 + A_0 = 0 
\end{align}
where the $R$ matrix ($10 \times 10$) can be found using iteration.
\begin{align}
R(0) &= [0],  \notag \\
R(n+1) &= -\sum\limits_{k=0,k\ne 1}^{\infty}R^k(n)A_kA_1^{-1}, n \ge 0 \\
 &= -(A_0A_1^{-1}+R^2(n)A_2A_1^{-1}). \notag
\end{align}
Let $\vec{e}$ be a column vector if 1's of various lengths, as appropriate.  
Using the expression in equation (\ref{piR}), $\vec{\pi} \vec{e} =1$ implies 
\begin{equation}\vec{\pi}_0 e + \vec{\pi}_1 (I-R)^{-1}e = 1. \label{pi3}\end{equation}
Using (\ref{pi1}), (\ref{pi2}) and (\ref{pi3}), $\vec{\pi}_0$ and $\vec{\pi}_1$ can be obtained. 
From these,  limiting probabilities for all states are obtained using (\ref{piR}).
Next
\begin{align}
E(L)&=\sum\limits_{j=1}^\infty \vec{\pi}_1 R^{j-1}(j\vec{e}+(1,1,1,1,1,2,2,2,2,2)^T) \notag \\
&=\vec{\pi}_1(\sum\limits_{j=1}^\infty jR^{j-1}\vec{e} + \sum\limits_{j=1}^\infty R^{j-1} (1,1,1,1,1,2,2,2,2,2)^T) \\
&=\vec{\pi}_1((I-R)^{-2}\vec{e}+(I-R)^{-1}(1,1,1,1,1,2,2,2,2,2)^T)\notag
\end{align}

\subsection{Numerical Example of Comparing Two Systems}
Set $a = 0.99$, $b = 0.98$ and $c = 0.1$. Then the expected number in the two systems (5 phase system vs M/M/1 with lowest service rate of the 5 phase system) in terms of $\lambda$ and $\mu$ is shown in figure \ref{3D}.

\begin{figure}[htb]
\centering
\includegraphics[scale=0.5]{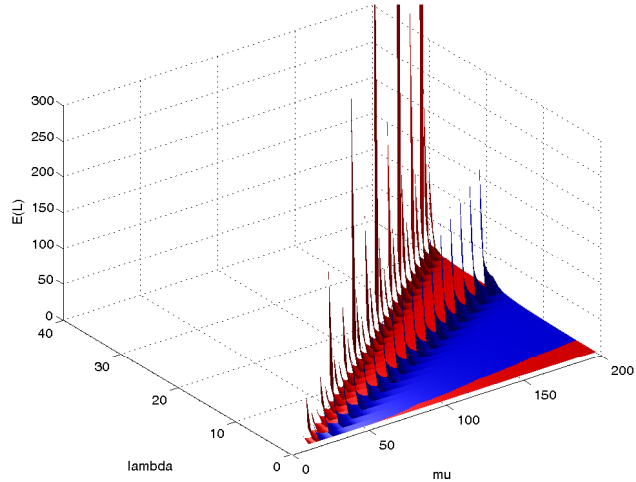}
\caption{Expected Numbers of Customers Varying with $\lambda$ \& $\mu$}
\label{3D}
\end{figure}

The expected numbers of the 5 phse system are plotted in red, and those of the $M/M/1$ system are plotted in blue. We see Figure \ref{2D} that the new system is better than the regular one only when the load $\frac{\lambda}{\mu}$ is within a certain range $(k_1, k_2)$, where $k_2 = c$. The value of $\lambda/\mu$ such that two systems have the same $E(L)$ is $k_1$.

\begin{figure}[htb]
\centering
\includegraphics[scale=0.6]{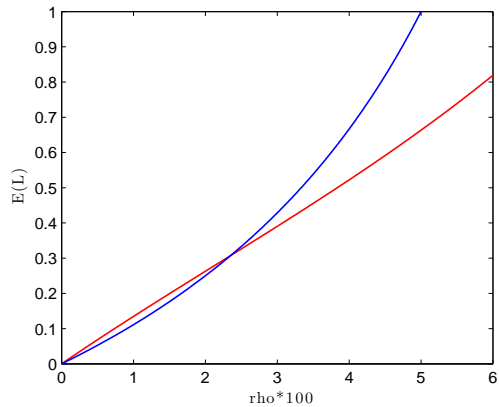}
\caption{Expected Numbers of Customers Varying with $\rho=\lambda/\mu$}
\label{2D}
\end{figure}

The value of $k_1$ could be estimated using MATLAB (see Appendix A).
For $a=0.99$, $b=0.98$ and $c=0.1$, $k_1$ is calculated to be around 0.02358. Thus, with $\lambda/\mu \in (0.02358,0.10000)$, the 5 phase system  performs better than the regular $M/M/1$ system.

\section{Conclusion}
Through comparisons on mean service rates and expected numbers of customers, we are able to state that, with two kinds of working vacations and one phase for regular rest,  a 4 phase  queueing system can never outperform the regular $M/M/1$ system with the minimal service rate. However, after we add another phase for the working vacation, it is possible for the queueing system to outperform the regular $M/M/1$ system, but only when the ratio of $\lambda$ and $\mu$ is within a certain range. The boundary of that range depends on the service rate decrease during working vacations. Basically, there is evidence that sleep is a valuable tool in allowing the brain to recuperate to its normal functioning. In a better model of the brain's recovery system, there would be a larger number of phases and the service rate would be large initially and drop off close to zero in the final phase. Our limited 5 phase model indicates that there is a real possibility for improved functioning with a good sleep cycle. The exact parameters of such a cycle would need to be estimated by a large data set, but the analysis here suggests that such a data collection is a valuable resource. 

\section{Acknowledgment}
We acknowledge funding and support  from MITACS Global Internship program, University of Windsor, Central South University, CSC Scholarship.

\quad

\newpage
\section*{Appendix}
\begin{appendix}
\section{MATLAB code serving to estimate $k_1$}
\begin{verbatim}
%mu=100
t1=zeros(1,1501);
t2=t1;
for i=0:0.01:15
k=i/100;
if k^2*(a*b+a*c+b*c+a+b+c)+2*k^3*(a+b+c+1)+3*k^4-a*b*c<0
t1(round(100*i+1))=E_cust_5ph(i,100,a,b,c);
%using floor() cause index must be a positive integer
else
t1(round(100*i+1))=NaN;
end
if k<c
t2(round(100*i+1)) = i/(c*100-i);
else
t2(round(100*i+1))=NaN;
end
end
figure;
plot(0:0.01:15,t1,'r');
axis([0 6 0 1]);
hold on
plot(0:0.01:15,t2,'b');

dif=t1-t2;
i=find(dif(1:1500).*dif(2:1501)<0);
k1=((i-1)/100+0.01*dif(i)/(dif(i)-dif(i+1)))/100;
%system with rest better than M/M/1 when k1*mu<lambda<c*m
\end{verbatim}

\section{MATLAB code of figure \ref{3D}}
\subsection{Function used for solving $E(L)$}
\begin{verbatim}
function z=E_cust_5ph(lambda,mu,a,b,c)

%find expected number of customers in a system with 4 kinds of speed
%lambda is the rate of arrival
%mu is the service rate

%e.g. lambda=1;mu=2;a=0.5;b=0.25;c=0.125

%mu=[mu1,mu2,mu3,mu4];

mu1 = mu;
mu2 = a*mu;
mu3 = b*mu;
mu4 = c*mu;
k = lambda/mu;

if k^2*(a*b+a*c+b*c+a+b+c)+2*k^3*(a+b+c+1)+3*k^4-a*b*c >= 0
    error('system is not stable, try other values of parameter')
end

a0 = [0,0,0,0,0;0,0,0,0,0;0,0,0,0,0;
    0,0,0,0,0;lambda/2,0,0,0,0];
a1 = [0,lambda,0,0,0;0,0,lambda,0,0;
    0,0,0,lambda,0;0,0,0,0,lambda;0,0,0,0,0];
a2 = [-(lambda+mu1),0,0,0,0;0,-(lambda+mu2),0,0,0;
    0,0,-(lambda+mu3),0,0;0,0,0,-lambda-mu4,0;0,0,0,0,-lambda/2];
a3 = [0,mu1,0,0,0;0,0,mu2,0,0;
    0,0,0,mu3,0;0,0,0,0,mu4;0,0,0,0,0];
A2 = [zeros(5),a3;zeros(5),zeros(5)];
A1 = [a2,a1;a3,a2];
A0 = [a0,zeros(5);a1,a0];
R = zeros(10);
for i=1:1:10^4
    T = -A0*inv(A1)-R*R*A2*inv(A1); % T is R(i), R is R(i-1)
    D = T - R;
    R = T;
    if norm(D,1)<10^(-200)
        break;
    end
end
B11 = [-lambda,0,0,0,0,lambda,0;0,-lambda,0,0,0,0,lambda;0,0,-lambda/2,0,0,0,0;
    mu2,0,0,-lambda-mu2,0,0,0;0,mu3,0,0,-lambda-mu3,0,0;0,0,mu4,0,0,-lambda-mu4,0;
    0,0,0,0,0,0,-lambda/2];
B12 = [zeros(1,10);zeros(1,10);lambda/2,zeros(1,9);
    0,0,lambda,zeros(1,7);0,0,0,lambda,zeros(1,6);0,0,0,0,lambda,zeros(1,5);
    0,0,0,0,0,lambda/2,0,0,0,0];
B21 = [zeros(5,2),a3;zeros(5,7)];
A = [B11,B12;B21,A1+R*A2];
Ac = [ones(7,1);(eye(10)-R)\ones(10,1)];
M = [A,Ac];
b = [zeros(1,17),1];
% pi*M = b  => M'*pi' = b'
pi = ((M')\(b'))';
pi1 = pi(8:17);
z = pi1*((inv(eye(10)-R)^2)*ones(10,1)+inv(eye(10)-R)*[1;1;1;1;1;2;2;2;2;2]);
\end{verbatim}
\subsection{Code for plotting}
\begin{verbatim}
a = 0.99;
b = 0.98;
c = 0.1;
z = zeros(200,200);
zz = z;
for m=1:200
    for n=1:200
        nn = n;
        k = m/nn;
        if k^2*(a*b+a*c+b*c+a+b+c)+2*k^3*(a+b+c+1)+3*k^4-a*b*c<0
           z(m,n)=E_cust_5ph(m,nn,a,b,c);
         else
            z(m,n)=NaN;
        end
        if m<c*nn
            zz(m,n) = m/(c*nn-m);
        else
            zz(m,n)=NaN;
        end
    end
end

cn=zeros(200);
for i=1:1:200
    for j=1:1:200
        cn(i,j,1)=1;%E(# with rest) is ploted in red
        cn(i,j,2)=0;cn(i,j,3)=0;
    end
end
co=zeros(200);
for i=1:1:200
    for j=1:1:200
        co(i,j,1)=0;co(i,j,2)=0;
        co(i,j,3)=1;%E(# in M/M/1) is ploted in blue
    end
end

figure;
surf(1:200,1:200,z,cn);
  axis([0 200 0 40 0 1000]);
 hold on
 surf(1:200,1:200,zz,co);
 xlabel('mu');
 ylabel('lambda');
 zlabel('expected #cust')
 shading interp;
\end{verbatim}
\end{appendix}
\end{document}